\theoremstyle{plain}
\newtheorem{theorem}{Theorem}
\newtheorem{lemma}{Lemma}
\newtheorem{proposition}{Proposition}
\theoremstyle{definition}
\newtheorem{remark}{Remark}
\renewcommand{\leq}{\leqslant}
\renewcommand{\geq}{\geqslant}
\newif\ifmeanright
\newcommand{\finite}[1]{\meanrightfalse
\ifx#1H\meanrighttrue\fi
\ifx#1B\meanrighttrue\fi
\ifmeanright{\lefteqn{\overset\circ{\hphantom{#1'}\vphantom{\prime}}}#1}
\else{\lefteqn{\overset\circ{\hphantom{#1}\vphantom{\prime}}}#1}\fi}
\def\Hnil{\@ifnextchar^{\@HnilInd}{\finite H}}
\def\@HnilInd^#1{\finite H\vphantom{H}^{#1}}
\def\SobNil{\finite H\vphantom{H}^1}
\def\EatFirstChar#1{\relax}
\newcommand{\wt}[1]{\futurelet\@next\@wt#1}
\def\@wt{
\ifx\@next\Fm \widetilde F_-\expandafter\EatFirstChar
\else
  \ifx\@next\Bcondm  \widetilde\Gamma_-
  \else
    \ifx\@next\Fp \widetilde F_+
    \else
      \ifx\@next\Bcondp \widetilde \Gamma_+
      \else
        \ifx\@next\Bcondpd \widetilde \Gamma_+^\perp
        \else\widetilde\@next
        \fi
      \fi
    \fi
  \fi
\expandafter\EatFirstChar\fi
}
\def\bea{\begin{eqnarray}}
\def\eea{\end{eqnarray}}
\def\beq{\begin{equation}}
\def\eeq{\end{equation}}
\def\beal{\begin{align*}}
\def\eeal{ \end{align*} }
\def\bfx{{\bf x}}
\def\cA{{\mathcal A}}
\def\cD{{\mathcal D}}
\def\cT{{\mathcal T}}
\def\mfa{{\mathfrak a}}
\def\SobNil{\finite H\vphantom{H}^1}
\def\SobNilInd^#1{\finite H\vphantom{H}^{#1}}
\begin{document}

\title{Monotonicity of Discrete spectra of Dirichlet Laplacian in 3-dimensional layers}
% Если колонтитул совпадает с полным названием статьи, то можно писать короче
% \title{Полное название статьи}

\author[Bakharev F. L.]{Bakharev F. L.${}^\dagger$}

\thanks{\textsc{${}^\dagger$}{Chebyshev Laboratory, St. Petersburg State University, 14th Line V.O., 29, Saint Petersburg 199178 Russia}}

\email{fbakharev@yandex.ru}

\author[Matveenko S. G.]{Matveenko S. G.${}^\dagger$}

\email{matveis239@gmail.com}

\thanks{{\it E-mail addresses:} \texttt{fbakharev@yandex.ru, matveis239@gmail.com}}

\keywords{Laplace operator, Dirichlet layers, discrete spectrum, continuous spectrum}

\begin{abstract}
We investigate monotonicity properties of eigenvalues of the Dirichlet Laplacian in polyhedral layers of fixed width. We establish that eigenvalues below the essential spectrum threshold monotonically depend on geometric parameters defining the polyhedral layer, generalizing previous results known for planar V-shaped waveguides and conical layers. Moreover, we demonstrate non-monotone spectral behavior arising from asymmetric geometric perturbations, providing an explicit example where unfolding the polyhedral layer unexpectedly leads to the emergence of discrete eigenvalues.
The limiting behavior of eigenvalues as the geometric parameters approach critical configurations is also rigorously analyzed.
\end{abstract}

\thanks{The work is supported by the Russian Science Foundation grant 19-71-30002}

\maketitle

\section{Introduction}

In the 1990s, the experimental realization of low-dimensional semiconductor structures (e.g., quantum wires and thin films) revealed 
electron transport phenomena in nanoscale devices, where quantum mechanical effects dominate classical considerations.
It was established that the appearance of trapped modes depends on the geometric structure of waveguides.
The influential early result about the existence and uniqueness of the eigenvalue below the essential spectrum in right-angled $V$-shaped planar waveguides was obtained in \cite{ExSeSt89}. 
In~\cite{AvBeGiMa91}, coordinate transformations to analyze multiplicity of discrete spectrum were introduced (see also \cite{CaLoMuMu93} for the experimental confirmation of the theoretical results). Later in \cite{DaLaRa12} and  \cite{DaRa12} eigenvalue monotonicity  was rigorously demonstrated for geometrically symmetric domains, such as planar V-shaped waveguides.
Further significant contributions were made in~\cite{Pa17}, where  estimates for the angular openings that ensure eigenvalue uniqueness were provided, refining earlier results from~\cite{NaSh14}. The latter work also includes multiplicity estimates for eigenvalues associated with small angles.

Parallel developments occurred in the study of Dirichlet layers --  unbounded regions with constant thickness constructed around geometric configurations in three-dimensional space (see \cite{DuExKr01, CaExKr04, LiLu07}). Despite their richer geometric complexity, Dirichlet layers exhibit spectral properties closely resembling those of quantum waveguides, particularly regarding the nature of their essential spectra and the appearance of discrete spectra associated with trapped modes (see more details about polyhedral layers in~\cite{DaLaOu18, BaAI20, BaMa24}, about conical layers in  \cite{CaExKr04, ExTa10, DaOuRa15}).

In this paper, we extend these studies to the setting of three-dimensional polyhedral layers. Specifically, we investigate constant-thickness Dirichlet layers constructed from polyhedral angles, emphasizing the monotonic dependence of discrete eigenvalues on geometric parameters. We employ coordinate transformations similar to those proposed in~\cite{AvBeGiMa91}; however, rather than solely establishing the existence or multiplicity of eigenvalues, our focus is on rigorously proving their monotonicity. Our results thus generalize previous monotonicity proofs (such as those in \cite{DaLaRa12, DaRa12}) to cases lacking geometric symmetry.

We begin by establishing proofs for known results concerning fundamental configurations, including planar V-shaped waveguides and conical layers and derive monotonic relationships between geometric parameters and eigenvalues beneath the essential spectrum threshold. Subsequently, we explore general polyhedral layers, providing rigorous results on eigenvalue monotonicity and their limiting behavior as geometric parameters approach critical values. Additionally, we illustrate that asymmetric unfolding of polyhedral angles can lead to non-monotonic behavior of the number of eigenvalues, exemplified by situations where eigenvalues appear or vanish unexpectedly as geometric parameters vary.

The paper is structured as follows.  
Section~\ref{sec-wg} reviews known results about spectral properties of planar V-shaped waveguides and contains key ideas for the later sections. 
In Section~\ref{sec-conical} the same spectral questions arise for conical layers. In Section~\ref{sec-2} we rigorously prove spectral monotonicity results for general polyhedral layers. In Section~\ref{sec-6}, we demonstrate a noteworthy example of non-monotonic spectral behavior arising from geometric asymmetry. 

\section{V-shaped waveguide}
\label{sec-wg}

Consider a planar angle of magnitude $2\theta$, $\theta\in(0, \pi/2)$.  Define a broken waveguide~$\Omega_\theta$ of unit width as a set of points inside this angle whose distance to the boundary is less than one. 
We work in Cartesian coordinates $\pmb{\bfx}=(x, z)$; the vertex is at the origin and the symmetry axis is the $z$-axis (see Fig.~\ref{fig-broken-wg}).

Define the Dirichlet Laplacian $\mathcal{A}^{\Omega}$ generated by a positive closed sesquilinear form 
\[
\mfa_{\Omega}[u, v] = (\nabla u, \nabla v)_{\Omega},
\]
defined on the Sobolev space $\SobNil(\Omega)$ of functions with square-integrable derivatives and zero trace on the boundary $\partial\Omega$.

Let us recall some properties of the Dirichlet spectral problem for the Laplace operator in the domains $\Omega_\theta$. For all $\theta\in (0,\pi/2)$ the essential spectrum of $\mathcal{A}^{\Omega_\theta}$ coincides with the ray $[\pi^2,+\infty)$. Moreover, its discrete spectrum is nonempty, finite, and for all $\theta\in (\arctan(\sqrt 3 / 4), \pi/2)\supset (\pi/4, \pi/2)$ there is a unique eigenvalue $\lambda_1(\mathcal{A}^{\Omega_\theta})$ below the threshold of the continuous spectrum (see, e.g., \cite{ExKo15} and \cite{Pa17}).
In \cite{AvBeGiMa91} the total multiplicity of the discrete spectrum of $\mathcal{A}^{\Omega_\theta}$ is proved to exceed any preassigned number for sufficiently small $\theta$.
Additionally, the first eigenvalue $\lambda_1(\mathcal{A}^{\Omega_\theta})$ is a monotonically increasing function on $(0,\pi/2)$ and $\lambda_1(\mathcal{A}^{\Omega_\theta})\to \pi^2$ as $\theta\to \pi/2$. In~\cite{DaRa12} the asymptotics for sharp angles is obtained, namely, $\lambda_1(\mathcal{A}^{\Omega_\theta}) \to \pi^2/4$ for $\theta\to 0$.

\begin{figure}[ht!]
\begin{center}
    \includegraphics[width=0.55\textwidth]{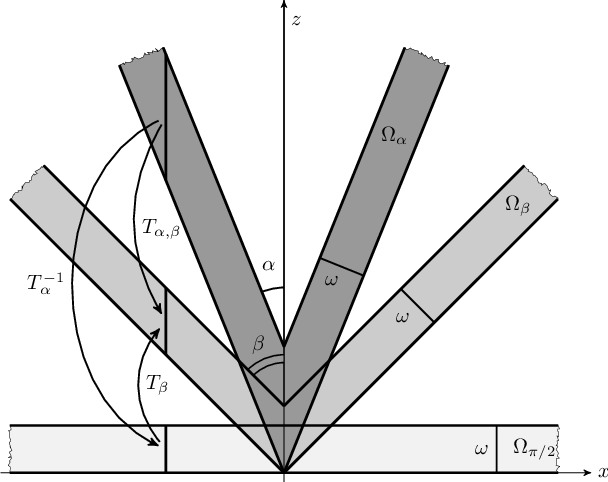}
    \caption{The {\sf V}-shaped waveguide $\Omega_\alpha$ (dark gray) overlaps the {\sf V}-shaped waveguide  $\Omega_\beta$ (light gray) and the straight strip $\Omega_{\pi/2}.$}
    \label{fig-broken-wg}
\end{center}
\end{figure}

The following lemma results from \cite[Theorem 10.2.3]{BiSo80}. 
\begin{lemma}
\label{lem-mutual-ev}
Let $\mfa$ be a closed lower semi-bounded  sesquilinear form  in a Hilbert space~$H$ with domain $\cD(\mfa)$, and let this form generate a self-adjoint operator~$\mathcal{A}$. Suppose that for some $M\leq \infty$ the spectrum of $\mathcal{A}$ below $M$ is discrete. If there exist linearly independent functions $v_1, v_2,\ldots, v_k\in\cD(\mfa)$ such that for any function $v$ from their linear span the following
inequality holds true 
\begin{equation*}
%\label{lem1-form-bound}
\mfa[v, v] < M\|v;H\|^2,
\end{equation*}
then the total multiplicity of the spectrum below $M$ is at least $k$.
\end{lemma}

Define an isomorphism between the straight strip $\Omega_{\pi/2}$ and the broken one~$\Omega_\beta$
(see Fig.~\ref{fig-broken-wg})
\begin{equation*}
T_\beta\colon \Omega_{\pi/2}\to\Omega_\beta,\qquad
T_\beta(\pmb{\bf x})=(x, z\csc\beta+|x|\cot\beta),    
\end{equation*}
and its inverse
\begin{equation*}
 T_\beta^{-1}\colon \Omega_\beta\to\Omega_{\pi/2},\qquad
T_\beta^{-1}(\pmb{\bf x})=(x, z\sin\beta-|x|\cos\beta).
\end{equation*}

Let $0<\alpha<\beta<\pi/2$. We define a continuous piecewise-linear bijective map $T_{\alpha,\beta} = T_\beta\circ T_\alpha^{-1}:\Omega_\alpha\to \Omega_\beta$ (see Fig.~\ref{fig-broken-wg}).  

\begin{lemma} 
\label{lem-q-form-monotony}
    Suppose there exist $\beta\in(0,\pi/2)$ and a function $u\in\SobNil(\Omega_\beta)$, normalized in $L_2(\Omega_\beta)$, such that $\mfa_{\Omega_\beta}[u,u]<\pi^2$. Then for all $\alpha\in(0,\beta)$ the functions $v_\alpha \in\SobNil(\Omega_\alpha)$ defined by
% \begin{equation}
%  \label{lem-q-form-monotony-func-trans}
% v_\alpha = \left(\frac{\sin\alpha}{\sin\beta}\right)^{1/2}u \circ T_{\alpha,\beta}
% \end{equation}
\begin{equation}
\label{lem-q-form-monotony-func-trans}
v_\alpha = \left(\frac{\sin\alpha}{\sin\beta}\right)^{1/2}u \circ T_{\alpha,\beta}=:\mathcal{T}_{\alpha, \beta}u,
\end{equation}
are normalized in $L_2(\Omega_\alpha)$. Furthermore, the function $\alpha \mapsto \mfa_{\Omega_\alpha}[v_\alpha, v_\alpha]$ increases monotonically. In particular, $\mfa_{\Omega_\alpha}[v_\alpha, v_\alpha] < \pi^2$ for all $\alpha \in (0, \beta)$.
\end{lemma}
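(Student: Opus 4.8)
The plan is to transplant the whole family to the straight strip $\Omega_{\pi/2}$, where the $\alpha$-dependence sits entirely in the change of variables and the transverse Poincaré inequality is transparent.

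First I set $w:=u\circ T_\beta\in\SobNil(\Omega_{\pi/2})$. Since $T_{\alpha,\beta}=T_\beta\circ T_\alpha^{-1}$, formula~\eqref{lem-q-form-monotony-func-trans} becomes $v_\alpha=(\sin\alpha/\sin\beta)^{1/2}\,w\circ T_\alpha^{-1}$, so that $v_\alpha\circ T_\alpha=(\sin\alpha/\sin\beta)^{1/2}w$ is independent of $\alpha$ up to a scalar; membership $v_\alpha\in\SobNil(\Omega_\alpha)$ is inherited because $T_\alpha^{-1}$ is bi-Lipschitz and maps boundary to boundary. As $\det DT_\alpha=\csc\alpha$, changing variables by $T_\alpha$ gives $\|v_\alpha;L_2(\Omega_\alpha)\|^2=(\sin\alpha/\sin\beta)\,\csc\alpha\,\|w;L_2(\Omega_{\pi/2})\|^2=\|w\|^2/\sin\beta$; reading this at $\alpha=\beta$, where $v_\beta=u$, shows $\|w\|^2=\sin\beta$, hence $\|v_\alpha\|^2=1$ for all $\alpha$. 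This disposes of the normalization.

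Next I compute the form by the same substitution. Because $T_\alpha$ acts linearly with matrix $\bigl(\begin{smallmatrix}\csc\alpha&\pm\cot\alpha\\0&1\end{smallmatrix}\bigr)$ on the half-strips $\Omega_{\pi/2}^{\pm}:=\Omega_{\pi/2}\cap\{\pm z>0\}$ (the sign flip coming from $|z|$), the pulled-back metric $(DT_\alpha)^{-1}(DT_\alpha)^{-\top}$ equals $\bigl(\begin{smallmatrix}1&\mp\cos\alpha\\\mp\cos\alpha&1\end{smallmatrix}\bigr)$ there, and carrying this through yields
\[
\mfa_{\Omega_\alpha}[v_\alpha,v_\alpha]=\frac{1}{\sin\beta}\Bigl(\mfa_{\Omega_{\pi/2}}[w,w]-2\cos\alpha\,\cR\Bigr),\qquad \cR:=\int_{\Omega_{\pi/2}^{+}}\Re\bigl(\partial_x w\,\overline{\partial_z w}\bigr)-\int_{\Omega_{\pi/2}^{-}}\Re\bigl(\partial_x w\,\overline{\partial_z w}\bigr).
\]
Differentiating, $\partial_\alpha\mfa_{\Omega_\alpha}[v_\alpha,v_\alpha]=(2\sin\alpha/\sin\beta)\,\cR$, so the entire statement reduces to showing that the single, $\alpha$-independent number $\cR$ is positive.

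The sign of $\cR$ is the crux, and here the hypothesis $\mfa_{\Omega_\beta}[u,u]<\pi^2$ is indispensable: for a generic $w$ the cross term $\cR$ is not sign-definite (a two-mode trial function $w=\sin(\pi x)a(z)+\sin(2\pi x)b(z)$ already makes it negative), so the subthreshold condition must be used. The transverse Dirichlet problem on the unit interval has first eigenvalue $\pi^2$, whence the Poincaré inequality $\int_{\Omega_{\pi/2}}|\partial_x w|^2\ge\pi^2\|w\|^2=\pi^2\sin\beta$. Evaluating the displayed identity at $\alpha=\beta$ turns the hypothesis into $\mfa_{\Omega_{\pi/2}}[w,w]-2\cos\beta\,\cR<\pi^2\sin\beta$; since $\mfa_{\Omega_{\pi/2}}[w,w]=\int_{\Omega_{\pi/2}}|\partial_x w|^2+\int_{\Omega_{\pi/2}}|\partial_z w|^2\ge\pi^2\sin\beta+\int_{\Omega_{\pi/2}}|\partial_z w|^2$, this forces $\int_{\Omega_{\pi/2}}|\partial_z w|^2<2\cos\beta\,\cR$, and as the left-hand side is nonnegative and $\cos\beta>0$ we obtain $\cR>0$. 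Consequently $\alpha\mapsto\mfa_{\Omega_\alpha}[v_\alpha,v_\alpha]$ is strictly increasing on $(0,\beta)$ and stays below its endpoint value $\mfa_{\Omega_\beta}[u,u]<\pi^2$. The main obstacle is exactly this last step: the cross term $\cR$ is indefinite in general, and one must recognize that the subthreshold condition, through the transverse Poincaré inequality, is precisely what pins down its sign.
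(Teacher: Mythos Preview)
Your proof is correct and follows essentially the same route as the paper: transplant everything to the straight strip via $T_\alpha$, write $\mfa_{\Omega_\alpha}[v_\alpha,v_\alpha]$ as an affine function of $\cos\alpha$ with a single $\alpha$-independent cross term, and use the subthreshold hypothesis together with the lower bound $\pi^2$ on the straight strip to force the sign of that cross term. The only cosmetic differences are that the paper works with the normalized function $v_{\pi/2}=(\sin\beta)^{-1/2}w$ and invokes the full inequality $\mfa_{\Omega_{\pi/2}}[v_{\pi/2},v_{\pi/2}]\ge\pi^2$ directly (rather than isolating the transverse Poincar\'e part $\int|\partial_x w|^2\ge\pi^2\|w\|^2$), and compares values at $\alpha$ and $\beta$ instead of differentiating.
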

\begin{proof} 
The equality $\|v_\alpha; L_2(\Omega_\alpha)\| = 1$ follows immediately from the fact that the Jacobian of $T_{\alpha,\beta}$ is constant and equals $\frac{\sin\alpha}{\sin\beta}$.
Note that, being Lipschitz, isomorphisms~$T_\alpha$ and their inverses $T_\alpha^{-1}$ preserve the Sobolev spaces. Hence, the functions $v_{\alpha}$ belong to~$\SobNil(\Omega_\alpha)$. 

Applying the change of variables $\pmb{\bf x}\mapsto T_\beta(\pmb{\bf x})$, we transfer $u$ into the straight strip, obtaining 
 \begin{equation*}
    \mfa_{\Omega_\beta}[u, u] = \mfa_{\Omega_{\pi/2}}[v_{\pi/2}, v_{\pi/2}] + \cos\beta\cdot {\mathfrak r}(v_{\pi/2})
\quad \mbox{where} \quad
 {\mathfrak r} (v) = 
- 2  (\mathop{\rm sgn}\nolimits(x) \partial_z v, \partial_x v)_{\Omega_{\pi/2}}.
\end{equation*}
It should be noted that by definition~\eqref{lem-q-form-monotony-func-trans} the function $v_{\pi/2}$ depends on $\beta$.
Since the discrete spectrum of the Dirichlet Laplacian in the straight strip $\Omega_{\pi/2}$ is empty, according to the max-min principle we have  
$\mfa_{\Omega_{\pi/2}}[v_{\pi/2}, v_{\pi/2}] \geqslant \pi^2$.
Thus, if $\mfa_{\Omega_\beta}[u,u]<\pi^2$ it follows necessarily that ${\mathfrak r}(v_{\pi/2})<0$.
Therefore, the inequality 
 \begin{equation*}
    \mfa_{\Omega_\beta}[u, u] > \mfa_{\Omega_{\pi/2}}[v_{\pi/2}, v_{\pi/2}] + \cos\alpha\cdot {\mathfrak r}(v_{\pi/2})
\end{equation*}
holds for all $\alpha\in(0, \beta)$. Applying the inverse change of variables $\pmb{\bf x}\mapsto T_\alpha^{-1}(\pmb{\bf x})$ on the right-hand side transfers $v_{\pi/2}$ back into the broken waveguide $\Omega_\alpha$. This yields the inequality
\begin{equation*}
    \mfa_{\Omega_\beta}[u, u] > \mfa_{\Omega_\alpha}[v_{\alpha}, v_\alpha]
\end{equation*}
 which completes the proof.
\end{proof}

\begin{theorem} \label{thm-monotonicity}
    If ${\mathcal{A}}^{\Omega_\beta}$ has $k$ eigenvalues below the threshold for some $\beta\in(0,\pi/2)$, then the operator ${\mathcal{A}}^{\Omega_\alpha}$ has at least $k$ eigenvalues (counted with multiplicities) below the threshold for all $\alpha\in (0, \beta)$. Moreover, the $j$-th eigenvalue $\lambda_j({\mathcal{A}}^{\Omega_\alpha})$ is a monotonically increasing function with respect to $\alpha$ for all $1\leqslant j\leqslant k$. 
\end{theorem}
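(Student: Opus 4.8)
The plan is to deduce Theorem~\ref{thm-monotonicity} by combining Lemma~\ref{lem-mutual-ev} with Lemma~\ref{lem-q-form-monotony}, using the eigenfunctions of $\mathcal{A}^{\Omega_\beta}$ to produce, for each smaller angle $\alpha$, a family of trial functions whose form values stay below the threshold $\pi^2$. First I would fix $\beta$ and let $u_1,\dots,u_k\in\SobNil(\Omega_\beta)$ be $L_2$-orthonormal eigenfunctions corresponding to the $k$ eigenvalues $\lambda_1(\mathcal{A}^{\Omega_\beta})\leqslant\cdots\leqslant\lambda_k(\mathcal{A}^{\Omega_\beta})<\pi^2$ below the essential spectrum. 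For each fixed $\alpha\in(0,\beta)$ I transport these via the map $u\mapsto(\sin\alpha/\sin\beta)^{1/2}\,u\circ T_{\alpha,\beta}$ of~\eqref{lem-q-form-monotony-func-trans}, obtaining functions $v_1^\alpha,\dots,v_k^\alpha\in\SobNil(\Omega_\alpha)$.

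The key step is to verify the hypotheses of Lemma~\ref{lem-mutual-ev} for the operator $\mathcal{A}^{\Omega_\alpha}$ with $M=\pi^2$. Since the essential spectrum of each $\mathcal{A}^{\Omega_\alpha}$ is $[\pi^2,+\infty)$, the spectrum below $M=\pi^2$ is indeed discrete, so the lemma applies. Because $T_{\alpha,\beta}$ is a bijection with constant Jacobian $\sin\alpha/\sin\beta$, the linear independence of the $u_j$ transfers to linear independence of the $v_j^\alpha$, and any $v$ in their linear span is the image $(\sin\alpha/\sin\beta)^{1/2}\,u\circ T_{\alpha,\beta}$ of the corresponding combination $u=\sum c_j u_j\in\SobNil(\Omega_\beta)$. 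Here I would address one subtlety that Lemma~\ref{lem-q-form-monotony} is stated for a single normalized $u$: I would note that its argument is linear in the sense that it applies verbatim to any $u\in\SobNil(\Omega_\beta)$ with $\mfa_{\Omega_\beta}[u,u]<\pi^2\|u;L_2(\Omega_\beta)\|^2$, since normalization only rescales both sides. For such $u$ the lemma yields $\mfa_{\Omega_\alpha}[v^\alpha,v^\alpha]<\pi^2\|v^\alpha;L_2(\Omega_\alpha)\|^2$, which is exactly~\eqref{lem1-form-bound}.

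It therefore remains to check that every nonzero $u=\sum_{j=1}^k c_j u_j$ in the eigenspan satisfies the strict bound $\mfa_{\Omega_\beta}[u,u]<\pi^2\|u;L_2(\Omega_\beta)\|^2$. By orthonormality this reduces to $\sum_j\lambda_j(\mathcal{A}^{\Omega_\beta})|c_j|^2<\pi^2\sum_j|c_j|^2$, which is immediate because each $\lambda_j(\mathcal{A}^{\Omega_\beta})<\pi^2$. Feeding this into Lemma~\ref{lem-mutual-ev} gives that $\mathcal{A}^{\Omega_\alpha}$ has at least $k$ eigenvalues below $\pi^2$, proving the first assertion.

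For the monotonicity of $\lambda_j(\mathcal{A}^{\Omega_\alpha})$ in $\alpha$, I would argue via the max-min principle. Fixing $\alpha<\alpha'<\beta$, I would take eigenfunctions realizing the top $j$ eigenvalues below threshold at angle $\alpha'$ and transport them down to angle $\alpha$ by $T_{\alpha,\alpha'}$; the monotonicity statement in Lemma~\ref{lem-q-form-monotony} (the function $\alpha\mapsto\mfa_{\Omega_\alpha}[v_\alpha,v_\alpha]$ is increasing) shows that each transported trial function has a strictly smaller Rayleigh quotient at $\alpha$ than the original had at $\alpha'$. Applying the max-min characterization over the $j$-dimensional trial subspace then gives $\lambda_j(\mathcal{A}^{\Omega_\alpha})\leqslant\lambda_j(\mathcal{A}^{\Omega_{\alpha'}})$, i.e. each $\lambda_j$ is monotonically increasing in $\alpha$. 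The main obstacle I anticipate is the bookkeeping of the max-min argument: one must ensure the transported $j$-dimensional subspace is still $j$-dimensional (guaranteed by the constant nonzero Jacobian) and that the normalization factors are handled correctly so that the strict inequality from Lemma~\ref{lem-q-form-monotony} passes cleanly to the ratio $\mfa_{\Omega_\alpha}[\cdot,\cdot]/\|\cdot;L_2(\Omega_\alpha)\|^2$.
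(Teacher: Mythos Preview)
Your proposal is correct and follows essentially the same approach as the paper: transport the $L_2$-orthonormal eigenfunctions of $\mathcal{A}^{\Omega_\beta}$ to $\Omega_\alpha$ via the isometry $\cT_{\alpha,\beta}u=(\sin\alpha/\sin\beta)^{1/2}u\circ T_{\alpha,\beta}$, use Lemma~\ref{lem-q-form-monotony} on the span to get the strict form bound, and then invoke Lemma~\ref{lem-mutual-ev}. The paper compresses existence and monotonicity into a single chain $\mfa_{\Omega_\alpha}[\cT_{\alpha,\beta}u,\cT_{\alpha,\beta}u]<\mfa_{\Omega_\beta}[u,u]\leqslant\lambda_j(\mathcal{A}^{\Omega_\beta})\|u\|^2$ (so that Lemma~\ref{lem-mutual-ev} applied with $M=\lambda_j(\mathcal{A}^{\Omega_\beta})$ on the $j$-dimensional transported span yields $\lambda_j(\mathcal{A}^{\Omega_\alpha})<\lambda_j(\mathcal{A}^{\Omega_\beta})$ directly), whereas you first take $M=\pi^2$ for existence and then repeat the transport from an intermediate angle $\alpha'$ for monotonicity; these are reorganizations of the same argument.
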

\begin{proof} 
Suppose the functions $u_{1}, u_{2}, \ldots, u_{k}$ form an orthonormal set in  $L_2(\Omega_\beta)$, consisting of eigenfunctions of ${\mathcal{A}}^{\Omega_\beta}$ corresponding to the eigenvalues $\lambda_{1}({\mathcal{A}}^{\Omega_\beta})$, $\lambda_{2}({\mathcal{A}}^{\Omega_\beta}), \ldots,$ $\lambda_{k}({\mathcal{A}}^{\Omega_\beta})$ respectively. 
Consider a linear operator 
\begin{equation*}
\cT_{\alpha, \beta}\colon \SobNil(\Omega_\beta)\mapsto\SobNil(\Omega_\alpha),
\end{equation*} given by formula~\eqref{lem-q-form-monotony-func-trans}.
%$\cT_{\alpha, \beta} u = \left(\frac{\sin\alpha}{\sin\beta}\right)^{1/2}u\circ T_{\alpha,\beta}$. 
This operator is one-to-one and onto, since it has an inverse~$\cT_{\beta, \alpha}$. Moreover, due to Lemma~\ref{lem-q-form-monotony} it preserves $L_2$-norm.
Thus, the functions 
%$v_{j,\alpha} = \left(\frac{\sin\alpha}{\sin\beta}\right)^{1/2}u_{j}\circ T_{\alpha,\beta}$,
$v_{j,\alpha} = \cT_{\alpha,\beta} u_j$, 
$1\leqslant j\leqslant k$, belong to $\SobNil(\Omega_\alpha)$ and remain orthonormal in $L_2(\Omega_\alpha)$ and hence linearly independent. 
Furthermore, by Lemma~\ref{lem-q-form-monotony} the chain of inequalities
\begin{equation*}
%\label{thm2-form-ineq}
\mfa_{\Omega_\alpha}[\cT_{\alpha,\beta} u, \cT_{\alpha,\beta} u] < \mfa_{\Omega_\beta}[u, u] \leq \pi^2\|u; L_2(\Omega_\beta)\|^2 = \pi^2\|\cT_{\alpha,\beta} u; L_2(\Omega_\alpha)\|^2
\end{equation*}
holds true for every $u$ from linear span of the eigenfunctions $u_{1}, u_{2}, \ldots, u_{k}$. Therefore, the existence of $k$ eigenvalues below the threshold for $\alpha\in (0,\beta)$ and their monotonicity in $\alpha$ immediately follow from Lemma \ref{lem-mutual-ev}.
\end{proof}

\begin{remark}
\label{rem-first-ev-as}
    The first eigenvalue $\lambda_1(\mathcal{A}^{\Omega_\alpha})$ converges to the threshold value $\pi^2$ as~\mbox{$\alpha\to\pi/2$}. 
\end{remark}
\begin{proof}
Let $u$ be an eigenfunction normalized in $L_2(\Omega_\alpha)$ corresponding to $\lambda_1(\mathcal{A}^{\Omega_\alpha})$. Using the notation from the proof of Lemma \ref{lem-q-form-monotony} we have
 \begin{equation*}
    \mfa_{\Omega_\alpha}[u,u] = \mfa_{\Omega_{\pi/2}}[v_{\pi/2},v_{\pi/2}] + \cos\alpha\cdot {\mathfrak r}(v_{\pi/2}).
\end{equation*}
Applying the arithmetic–geometric mean inequality
\begin{equation*}
    2|\partial_x v_{\pi/2}(\pmb{\bf x})\partial_z v_{\pi/2}(\pmb{\bf x})| \leqslant
    |\nabla v_{\pi/2}(\pmb{\bf x})|^2,
\end{equation*}
we obtain the estimate
\begin{equation*}
     \mfa_{\Omega_\alpha}[u,u] \geqslant (1 - \cos\alpha)\mfa_{\Omega_{\pi/2}}[v_{\pi/2},v_{\pi/2}].
\end{equation*}
Keeping in mind the inequality $\lambda_1(\mathcal{A}^{\Omega_\alpha}) = \mfa_{\Omega_\alpha}[u, u] <\pi^2$ and using the max-min principle we obtain the following chain of estimates
\begin{equation*}
    \pi^2>\lambda_1(\mathcal{A}^{\Omega_\alpha})\geqslant (1 - \cos\alpha)\mfa_{\Omega_{\pi/2}}[v_{\pi/2},v_{\pi/2}]\geqslant(1-\cos\alpha)\pi^2.
\end{equation*}
Since $\cos\alpha\to0$ as $\alpha\to\pi/2$, the squeeze theorem yields the convergence of the first eigenvalue $\lambda_1(\mathcal{A}^{\Omega_\alpha})$ to $\pi^2$.
\end{proof}

\begin{remark}
The results presented in this paper remain valid not only for plane V-shaped waveguides with unit segment cross-sections at their outlets but also for multidimensional waveguides whose cross-sections are connected domains with a Lipschitz boundary (see, e.g., \cite{BaMa25} for details). 
\end{remark}

\section{Conical layer}
\label{sec-conical}

\def\Cone{\Pi}
A natural generalization of a broken waveguide is a circular conical layer. Consider an infinite right circular cone in three-dimensional space with vertex angle $2\theta$, defined such that the angle between the cone's generatrix (lateral boundary) and its axis of symmetry equals $\theta$.  
A conical layer $\Cone^\theta$ of unit thickness is a region inside this cone consisting of all points whose distance from the cone’s lateral surface is less than one (see Fig.~\ref{fig-con-lay}).
With this notation, the straight layer (the region of space bounded by two parallel planes at a distance of one) corresponds to $\Cone^{\pi/2}$. 

We use a cylindrical coordinate system $\pmb{\bfx} = (r, \varphi, z)$ with $r\geq0$ and $\varphi\in[0, 2\pi)$ placing the apex of the cone at the origin and aligning its symmetry axis with the longitudinal $z$-axis.  

\begin{figure}[ht!]
\begin{center}
    \includegraphics[width=0.3\linewidth]{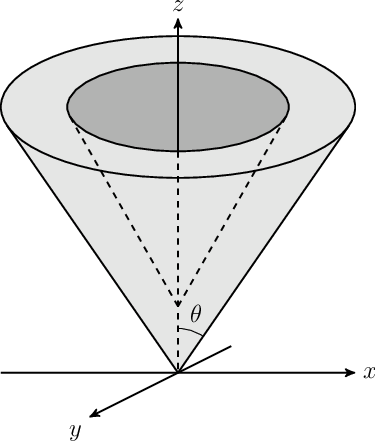}
    \caption{Conical layer with vertex angle $2\theta$}
\label{fig-con-lay}
\end{center}
\end{figure}

The following claim is standard and follows from separation of variables (Fourier transform in longitudinal variable).

\begin{proposition}
	\label{thm-sp-in-straight-layer}
	The spectrum of the operator ${\mathcal{A}^{\Cone^{\pi/2}}}$ coincides with the ray
	\[
	\sigma(\mathcal{A}^{\Cone^{\pi/2}}) = \sigma_{ess}(\mathcal{A}^{\Cone^{\pi/2}}) = [\pi^2,+\infty).
	\]
\end{proposition}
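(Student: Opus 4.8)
The plan is to prove both equalities by separating variables in the straight layer $\Cone^{\pi/2}$, which is simply the slab $\mathbb{R}^2 \times (0,1)$ after choosing coordinates so that the two bounding planes are $\{$distance $0\}$ and $\{$distance $1\}$. Writing the transverse variable as $t \in (0,1)$ and the longitudinal coordinates as $\pmb{\bfy} = (y_1, y_2) \in \mathbb{R}^2$, the Dirichlet Laplacian $\mathcal{A}^{\Cone^{\pi/2}}$ acts as $-\Delta_{\pmb{\bfy}} - \partial_t^2$ with Dirichlet conditions at $t=0,1$. The operator splits as an orthogonal sum
\[
\mathcal{A}^{\Cone^{\pi/2}} = (-\Delta_{\pmb{\bfy}}) \otimes I + I \otimes (-\partial_t^2)_{D},
\]
where $(-\partial_t^2)_D$ is the one-dimensional Dirichlet Laplacian on $(0,1)$.

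First I would recall the two constituent spectra: the free Laplacian $-\Delta_{\pmb{\bfy}}$ on $L_2(\mathbb{R}^2)$ has purely absolutely continuous spectrum $[0, +\infty)$ (diagonalized by the Fourier transform), and the transverse Dirichlet Laplacian $(-\partial_t^2)_D$ has pure point spectrum $\{\pi^2 n^2 : n \in \NN\}$ with eigenfunctions $\sqrt 2 \sin(\pi n t)$. The spectrum of the tensor sum is the closure of the arithmetic sum of the two spectra,
\[
\sigma(\mathcal{A}^{\Cone^{\pi/2}}) = \overline{\sigma(-\Delta_{\pmb{\bfy}}) + \sigma((-\partial_t^2)_D)} = [0,+\infty) + \{\pi^2 n^2\} = [\pi^2, +\infty),
\]
the smallest entry coming from the lowest transverse mode $n=1$ added to the infimum $0$ of the longitudinal spectrum.

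To make this rigorous I would either invoke the standard spectral-mapping theorem for sums of commuting self-adjoint operators, or—more self-containedly—use the Fourier transform in $\pmb{\bfy}$ to realize $\mathcal{A}^{\Cone^{\pi/2}}$ as a direct integral $\int_{\mathbb{R}^2}^\oplus \bigl(|\pmb{\bfk}|^2 + (-\partial_t^2)_D\bigr)\, d\pmb{\bfk}$, whose fibers have spectrum $\{|\pmb{\bfk}|^2 + \pi^2 n^2\}$; the union over $\pmb{\bfk} \in \mathbb{R}^2$ and $n \in \NN$ gives $[\pi^2, +\infty)$. Finally, since every point of this ray lies in a continuous band (each spectral value is attained on a nondegenerate surface in $\pmb{\bfk}$-space, so no eigenvalue has positive multiplicity), the spectrum is purely essential, giving $\sigma = \sigma_{ess} = [\pi^2, +\infty)$.

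The only genuine subtlety is identifying $\Cone^{\pi/2}$ with the flat slab and confirming that the Dirichlet form transports correctly under this identification; once that is settled, the argument is entirely the standard separation of variables, so I expect no serious obstacle. The main point to state carefully is that the threshold $\pi^2$ is \emph{not} an isolated eigenvalue but the bottom of the continuous spectrum, which is why it belongs to $\sigma_{ess}$.
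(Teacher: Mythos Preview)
Your approach is correct and is exactly what the paper does: it simply states that the claim ``is standard and follows from separation of variables (Fourier transform in longitudinal variable)'' without further detail. Your write-up is a faithful expansion of that one-line justification; the only cosmetic issue is the phrase ``no eigenvalue has positive multiplicity,'' which you should replace by the clearer statement that the spectrum is purely (absolutely) continuous, i.e.\ there are no $L_2$-eigenfunctions.
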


As in Section \ref{sec-wg}, we define an isomorphism between the straight layer $\Cone^{\pi/2}$ and the conical one~$\Cone^\theta$:
\begin{equation*}
T_\theta\colon \Cone^{\pi/2}\to\Cone^\theta,\qquad
T_\theta(\pmb{\bf x})=( r, \varphi, z\csc\theta+r\cot\theta)
\end{equation*}
and its inverse:
\begin{equation*}
 T_\theta^{-1}\colon \Cone^\theta\to\Cone^{\pi/2},\qquad
T_\theta^{-1}(\pmb{\bf x})=(r, \varphi, z\sin\theta-r\cos\theta).
\end{equation*}
For $0<\theta<\varsigma<\pi/2$, we define a continuous piecewise-linear bijection $T_{\theta,\varsigma} = T_\varsigma\circ T_\theta^{-1}$.

The same reasoning as in Lemma \ref{lem-q-form-monotony} demonstrates that the function 
$$
\theta \mapsto \mfa_{\Cone^\theta}[v_\theta,v_\theta] \quad \mbox{where} \quad
v_\theta = \left(\frac{\sin\theta}{\sin\varsigma}\right)^{1/2}u \circ T_{\theta,\varsigma},
$$ is monotonically increasing (here  $u\in\SobNil(\Pi^\varsigma)$, normalized in $L_2(\Pi^\varsigma)$, and satisfy the inequality $\mfa_{\Pi^\varsigma}[u,u]<\pi^2$). Since, as in case of the broken waveguide, the threshold value equals $\pi^2$, the only change required in the proof is the explicit form of the remainder term:
\begin{equation*}
 {\mathfrak r} (v) = 
- 2  (\partial_z v, r\partial_r v)_{\Cone^{\pi/2}}.
\end{equation*}

The main difference between the spectral problem for a conical layer and the analogous problem for a broken waveguide is that the discrete spectrum of $\mathcal{A}^{\Pi^{\theta}}$ contains infinitely many eigenvalues below the threshold (see~\cite{DaOuRa15}). Nevertheless, applying Lemma~\ref{lem-mutual-ev}, we repeat the proof of Theorem \ref{thm-monotonicity} step-by-step and arrive at the following result:
\begin{theorem} %\label{thm-cone-monotonicity}
    Let $\{\lambda_j(\mathcal{A}^{\Pi^{\varsigma}})\}_{j=1}^\infty$ be the eigenvalues of $\mathcal{A}^{\Pi^{\varsigma}}$ below the threshold $\Lambda_\dagger=\pi^2$. Then the eigenvalues  $\{\lambda_j(\mathcal{A}^{\Pi^{\theta}})\}_{j=1}^\infty$ of $\mathcal{A}^{\Pi^{\theta}}$ satisfy the inequalities
    \[
    \lambda_j(\mathcal{A}^{\Pi^{\theta}}) < \lambda_j(\mathcal{A}^{\Pi^{\varsigma}})
    \]
    for all $\theta\in (0, \varsigma)$. Moreover, each eigenvalue  $\lambda_j(\mathcal{A}^{\Pi^{\theta}})$ is monotonically increasing as a function of $\theta$. 
\end{theorem}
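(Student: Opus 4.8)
The plan is to transplant the argument of Theorem~\ref{thm-monotonicity} essentially verbatim, the only genuinely new feature being that the discrete spectrum is now infinite, so the comparison must be carried out for each index $j\in\NN$ separately. Fix $j$ and let $u_1,\dots,u_j\in\SobNil(\Pi^\varsigma)$ be an $L_2(\Pi^\varsigma)$-orthonormal family of eigenfunctions of $\mathcal{A}^{\Pi^\varsigma}$ associated with $\lambda_1(\mathcal{A}^{\Pi^\varsigma})\le\dots\le\lambda_j(\mathcal{A}^{\Pi^\varsigma})<\pi^2$. Exactly as in the conical counterpart of Lemma~\ref{lem-q-form-monotony}, I would introduce the operator
\[
\cT_{\theta,\varsigma}\colon\SobNil(\Pi^\varsigma)\to\SobNil(\Pi^\theta),\qquad
\cT_{\theta,\varsigma}u=\Bigl(\tfrac{\sin\theta}{\sin\varsigma}\Bigr)^{1/2}u\circ T_{\theta,\varsigma}.
\]
Since $T_{\theta,\varsigma}$ is a bi-Lipschitz bijection with constant Jacobian $\sin\theta/\sin\varsigma$, the operator $\cT_{\theta,\varsigma}$ is an $L_2$-isometric isomorphism; hence the functions $v_{i,\theta}=\cT_{\theta,\varsigma}u_i$ remain orthonormal in $L_2(\Pi^\theta)$ and span a $j$-dimensional subspace of $\SobNil(\Pi^\theta)$.

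For the comparison I would invoke the monotonicity of $\theta\mapsto\mfa_{\Pi^\theta}[v_\theta,v_\theta]$ established above. Any $u$ in the span of $u_1,\dots,u_j$ satisfies $\mfa_{\Pi^\varsigma}[u,u]\le\lambda_j(\mathcal{A}^{\Pi^\varsigma})\,\|u;L_2(\Pi^\varsigma)\|^2<\pi^2\|u;L_2(\Pi^\varsigma)\|^2$, so the hypothesis of the conical lemma holds and, for $\theta<\varsigma$, the strict inequality
\[
\mfa_{\Pi^\theta}[\cT_{\theta,\varsigma}u,\cT_{\theta,\varsigma}u]<\mfa_{\Pi^\varsigma}[u,u]\le\lambda_j(\mathcal{A}^{\Pi^\varsigma})\,\|\cT_{\theta,\varsigma}u;L_2(\Pi^\theta)\|^2
\]
follows, the last equality using that $\cT_{\theta,\varsigma}$ preserves the $L_2$-norm. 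Thus the $j$ linearly independent functions $v_{1,\theta},\dots,v_{j,\theta}$ fulfil the hypothesis of Lemma~\ref{lem-mutual-ev} with $M=\lambda_j(\mathcal{A}^{\Pi^\varsigma})$; since $\lambda_j(\mathcal{A}^{\Pi^\varsigma})<\pi^2$ the spectrum of $\mathcal{A}^{\Pi^\theta}$ below this level is discrete, and the lemma produces at least $j$ eigenvalues there. In particular $\mathcal{A}^{\Pi^\theta}$ has a $j$-th eigenvalue below the threshold and $\lambda_j(\mathcal{A}^{\Pi^\theta})<\lambda_j(\mathcal{A}^{\Pi^\varsigma})$.

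The monotonicity in $\theta$ then follows by running the same comparison for an arbitrary pair $0<\theta_1<\theta_2<\varsigma$, taking $\theta_2$ as the reference angle: the previous step already guarantees that $\mathcal{A}^{\Pi^{\theta_2}}$ possesses a $j$-th eigenvalue below $\pi^2$, so the argument applies and yields $\lambda_j(\mathcal{A}^{\Pi^{\theta_1}})<\lambda_j(\mathcal{A}^{\Pi^{\theta_2}})$. I do not expect a serious obstacle here, since the analytic core—the sign of the remainder ${\mathfrak r}(v)=-2(\partial_x v,\,r\,\partial_r v)_{\Pi^{\pi/2}}$ and the resulting monotonicity of the form—is already available from the conical version of Lemma~\ref{lem-q-form-monotony}. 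The one point that merely deserves care is the choice of the test level $M$ in Lemma~\ref{lem-mutual-ev}: testing against $M=\pi^2$ would only reproduce the \emph{existence} of $j$ eigenvalues below the threshold, whereas the strict comparison $\lambda_j(\mathcal{A}^{\Pi^\theta})<\lambda_j(\mathcal{A}^{\Pi^\varsigma})$ requires the choice $M=\lambda_j(\mathcal{A}^{\Pi^\varsigma})$ together with the discreteness of the spectrum below any value strictly smaller than $\pi^2$. Everything else is a step-by-step repetition of the planar argument.
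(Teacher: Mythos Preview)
Your proposal is correct and follows essentially the same route as the paper, which explicitly says one ``repeat[s] the proof of Theorem~\ref{thm-monotonicity} step-by-step'' using Lemma~\ref{lem-mutual-ev} and the conical analogue of Lemma~\ref{lem-q-form-monotony}. Your additional care in choosing $M=\lambda_j(\mathcal{A}^{\Pi^\varsigma})$ rather than $M=\pi^2$ to extract the strict inequality is exactly what the paper does in the later, more detailed Theorem~\ref{thm-layer-regular-mon}, so there is no discrepancy.
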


Repeating the arguments from the proof of Remark \ref{rem-first-ev-as}, we obtain 
\begin{remark}
%\label{rem-ev-as-cone}
    Each eigenvalue $\lambda_j(\mathcal{A}^{\Cone^\theta})$, $j\geq 1$, converges to the threshold value $\pi^2$ as $\theta\to\pi/2$. 
\end{remark}

\section{Polyhedral layer}
\label{sec-2}
\subsection{Domain geometry}
%\label{sub-sec-2.1}

We consider a convex $n$-gon $P_1P_2\ldots P_n$ ($n\geq 3$), lying in the plane $z=1$, in the space $\mathbb{R}^3$, equipped with the Cartesian coordinate system $(x, y, z)$ with its origin $O$. The convex hull of the rays $\ell_j=OP_j$ forms a polyhedral angle with $n$ faces
$$\Upsilon = \mathop{\rm conv}\nolimits\{ \ell_j \colon 1\leq j\leq n\}.$$ 
Its boundary $\Gamma=\partial \Upsilon$ consists of vertex angles $\Gamma_j=\mathop{\rm conv}\nolimits\{\ell_j, \ell_{j+1}\}$, which are equal to $\alpha_j=\angle P_{j}OP_{j+1}$. The inner dihedral angle along the edge $\ell_j$ is denoted by~$\beta_j$.
Here and below we assume that indices are taken modulo $n$, i.e. $n+1=1$, $n+2=2$, and so on.  

\begin{figure}[ht!]
    \centering
    \includegraphics[width=0.35\textwidth]{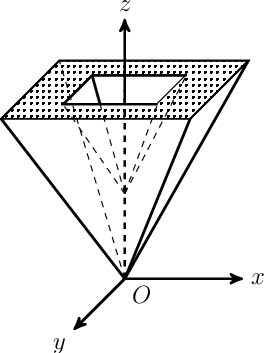}  
    \qquad\qquad\qquad
    \includegraphics[width=0.29\textwidth]{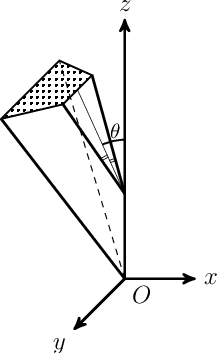} 
    \caption{Layer built on regular tetrahedral angle on the left and an element $\varpi^\theta_{4,2}$ of the partition on the right (angles marked by double arcs are equal to $\alpha/2$) }
    \label{fig-quad-layer}
\end{figure}

The polyhedral layer of unit width is defined by the formula 
$$
\Pi=\{\bfx \in \Upsilon \colon \mathop{\rm dist}\nolimits(\bfx, \Gamma)\in (0,1)\}.
$$
An example of a layer constructed from the quadrihedral angle is represented in Fig.~\ref{fig-quad-layer}, left.
Its boundary $\partial \Pi$ consists of two polyhedral surfaces: the outer one coincides with~$\Gamma$ and the inner one is denoted by $\Gamma'$. If a ball can be inscribed into the angle~$\Upsilon$, then $\Gamma$ and $\Gamma'$ can be matched by a parallel shift. Otherwise, the inner polyhedral surface $\Gamma'$ may have more complicated structure. In this work, however, we consider only the first situation, which occurs, for instance, in the case of any trihedral angle, as well as in the case of a regular polyhedral angle.
We call a polyhedral layer \textit{regular} if its outer boundary $\Gamma$ forms a regular polyhedral cone, i.e. all dihedral angles $\beta_j$ are equal and all vertex angles $\alpha_j$ at the vertex $O$ are equal. In what follows we denote the corresponding quantities by $\beta$ and~$\alpha$ respectively.

\subsection{Spectral problem and preliminary information about the spectrum}

In recent papers \cite{DaLaOu18}, \cite{BaAI20}, and \cite{BaMa24}
it was established that the essential spectrum of the operator $\mathcal{A}^\Pi$ occupies the ray $[\Lambda_\dagger,+\infty)$. The cut-off value $\Lambda_\dagger=\Lambda_\dagger(\Pi)$ equals the first eigenvalue of the ${\sf V}$-shaped waveguide whose inner vertex angle matches the smallest of the dihedral angles $\beta_j$.
In \cite{BaMa24} the authors proved that the discrete spectrum of the operator $\mathcal{A}^\Pi$ in a polyhedral layer $\Pi$ is finite and lies below the threshold $\Lambda_\dagger$. Moreover, for regular polyhedral layers the discrete spectrum is not empty.

The eigenvalues of the operator $\mathcal{A}^\Pi$ below the threshold $\Lambda_\dagger$ can be evaluated using the max-min principle (see \cite[Theorem 10.2.2]{BiSo80}):
\[
\lambda_j(\Pi)\ \ =\sup\limits_{\mathop{\rm codim}\nolimits E=j-1}\ \inf\limits_{u\in E\setminus{\{\vec{0}\}}}\ 
\frac{\|\nabla u;L_2(\Pi)\|^2}{\|u;L_2(\Pi)\|^2},
\] 
where the supremum is taken over all subspaces~$E$ of codimension~ $j-1$ in the space~$\lefteqn{\overset\circ{\hphantom{H'}\vphantom{\prime}}}H\vphantom{H}^1(\Pi)$. 
If the first eigenvalue $\lambda_1(\Pi)$ of the Dirichlet problem exists, it is known to be simple, and the corresponding eigenfunction can be chosen to be positive.

We use a cylindrical coordinate system $\pmb\bfx = (r, \varphi, z)$ with the origin at $O$ and the axis of rotational symmetry coinciding with the $z$-axis. We denote the angle of rotational symmetry by 
\[
\psi_N = \frac{2\pi}N.
\] 
Denote by $\Pi_N^\theta$ a regular polyhedral layer whose outer boundary has  $N$ dihedral angles, with the angle between each face and the axis of rotational symmetry equal to $\theta$. This angle $\theta$ can be expressed in terms of the vertex angle $\alpha$ and the angle of rotational symmetry:
\[
\sin\theta = \tan\left(\frac \alpha 2\right)\cot\left(\frac{\psi_N}{2}\right).
\]

Consider the following partition of $\Pi^\theta_N$ (see Fig.~\ref{fig-quad-layer}, right):
\[
\varpi^\theta_{N,j} = \left\{\pmb\bfx = (r, \varphi, z)\in\Pi^\theta_{N}\colon \varphi\in\left(j\psi_N-\frac{\pi}{N}, j\psi_N+\frac\pi N\right)\right\},\qquad 0\leq j \leq N-1.
\]
The quadratic form for the operator $\mathcal{A}^{\Pi_N^\theta}$ can be written as
\[
\mfa_{\Pi^\theta_N}[u, u] = \sum_{j=0}^{N-1}\int_{\varpi_{N,j}^\theta}\left(|\partial_ru|^2+r^{-2}|\partial_\varphi u|^2+|\partial_zu|^2\right)r\,dr\,d\varphi\,dz.
\]

Again, we define an isomorphism between the straight layer $\Pi^{\pi/2}_N$ and the polyhedral one $\Pi^\theta_N$ and its inverse:
\begin{eqnarray*}
T_\theta\colon \Pi^{\pi/2}_N\to\Pi^\theta_N,&
T_\theta(\pmb{\bf x})=(r, \varphi, z\csc\theta+r\cot\theta\cos(\varphi - j\psi_N)),& \pmb{\bf x}\in\varpi_{N,j}^{\pi/2}, \\
 T_\theta^{-1}\colon \Pi^\theta_N\to\Pi^{\pi/2}_N,&
T_\theta^{-1}(\pmb{\bf x})=(r, \varphi, z\sin\theta-r\cos\theta\cos(\varphi - j\psi_N)),& \pmb{\bf x}\in\varpi_{N,j}^{\theta}.
\end{eqnarray*}
For $0<\theta<\varsigma<\pi/2$ we define a continuous, piecewise-linear bijection $T_{\theta,\varsigma} = T_\varsigma\circ T_\theta^{-1}$.

Below we prove an analog of Lemma \ref{lem-q-form-monotony} establishing the monotonicity of the quadratic form $\mfa_{\Pi^\theta_N}$.

\begin{lemma} \label{lem-layer-q-form-monotony}
    Suppose there exist $\varsigma\in(0,\pi/2)$ and $u\in\SobNil(\Pi^{\varsigma}_{N})$, normalized in $L_2(\Pi^{\varsigma}_N)$,  such that 
    %$\mfa_{\Pi^{\varsigma}_N}[u, u]<\Lambda_\dagger$
    $\mfa_{\Pi^{\varsigma}_N}[u, u]<\pi^2$; then for all $\theta\in(0,\varsigma)$
    the functions $v_\theta \in\SobNil(\Pi^\theta_N)$ defined by
    \begin{equation}
    \label{v_theta}
v_\theta = \left(\frac{\sin\theta}{\sin\varsigma}\right)^{1/2}u \circ T_{\theta,\varsigma}=:\mathcal{T}_{\theta, \varsigma} u,
\end{equation}
are also normalized in $L_2(\Pi^\theta_N)$. Moreover, the map $\theta \mapsto \mfa_{\Pi^\theta_N}[v_\theta,v_\theta]$ increases monotonically for all $\theta \in (0, \varsigma)$.
\end{lemma}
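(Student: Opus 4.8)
The plan is to mirror the proof of Lemma~\ref{lem-q-form-monotony} essentially verbatim, changing only the explicit form of the remainder term, exactly as was done when passing from the V-shaped waveguide to the conical layer. The normalization and Sobolev-membership claims come first and are handled as before: the Jacobian of $T_{\theta,\varsigma}$ is constant and equal to $\sin\theta/\sin\varsigma$ on every sector $\varpi^\theta_{N,j}$ (the map acts only on the longitudinal coordinate, with factor $\csc\theta$ for $T_\theta$ and $\sin\theta$ for $T_\theta^{-1}$), so the change-of-variables formula gives $\|v_\theta; L_2(\Pi^\theta_N)\| = \|u; L_2(\Pi^\varsigma_N)\| = 1$. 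Membership $v_\theta\in\SobNil(\Pi^\theta_N)$ follows because $T_{\theta,\varsigma}$ and its inverse are globally Lipschitz and map boundary to boundary; the one point deserving a remark is that the factor $\cos(\varphi-j\psi_N)$ in the definition of $T_\theta$ is exactly what makes $T_\theta$ continuous across the interface $\varphi=j\psi_N+\pi/N$ between adjacent sectors (where the two one-sided values $\cos(\pm\pi/N)$ agree), so that no spurious jump is introduced and $w:=v_{\pi/2}$ is a genuine element of $\SobNil(\Pi^{\pi/2}_N)$.

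The core of the argument is the change of variables $\pmb{\bf x}\mapsto T_\theta(\pmb{\bf x})$ transferring $v_\theta$ to the fixed straight-layer function $w$, which is independent of $\theta$. Writing $X = x\sin\theta - r\cos\theta\cos(\varphi-j\psi_N)$ on $\varpi^{\pi/2}_{N,j}$, I would compute $\partial_x v_\theta$, $\partial_r v_\theta$, $r^{-1}\partial_\varphi v_\theta$ by the chain rule, substitute into the cylindrical quadratic form, and use $dx=(\sin\theta)^{-1}dX$. The decisive point is that the coefficient of $|\partial_X w|^2$ collapses to $\sin^2\theta + \cos^2\theta\left(\cos^2(\varphi-j\psi_N)+\sin^2(\varphi-j\psi_N)\right)=1$, independent of $\theta$, while the $|\partial_r w|^2$ and $r^{-2}|\partial_\varphi w|^2$ terms pass through untouched. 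This produces the clean identity
\begin{equation*}
\mfa_{\Pi^\theta_N}[v_\theta, v_\theta] = \mfa_{\Pi^{\pi/2}_N}[w, w] + \cos\theta\cdot{\mathfrak r}(w),
\end{equation*}
with remainder
\begin{equation*}
{\mathfrak r}(w) = -2\sum_{j=0}^{N-1}\int_{\varpi^{\pi/2}_{N,j}}\partial_x w\left(\cos(\varphi-j\psi_N)\,\partial_r w - r^{-1}\sin(\varphi-j\psi_N)\,\partial_\varphi w\right)r\,dx\,dr\,d\varphi.
\end{equation*}
Crucially, $w$ and hence ${\mathfrak r}(w)$ carry no $\theta$-dependence: all of it sits in the scalar coefficient $\cos\theta$.

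With this identity the monotonicity is immediate. Since $w$ is normalized in $L_2(\Pi^{\pi/2}_N)$ and the straight layer has spectral threshold $\pi^2$ (Proposition~\ref{thm-sp-in-straight-layer}), the max-min principle gives $\mfa_{\Pi^{\pi/2}_N}[w,w]\geqslant\pi^2$. Taking $\theta=\varsigma$ in the identity (so that $v_\varsigma=u$) and combining with the hypothesis $\mfa_{\Pi^\varsigma_N}[u,u]<\pi^2$ forces $\cos\varsigma\cdot{\mathfrak r}(w)<0$, hence ${\mathfrak r}(w)<0$. Therefore $\mfa_{\Pi^{\pi/2}_N}[w,w]+\cos\theta\cdot{\mathfrak r}(w)$ is an affine function of $\cos\theta$ with negative coefficient; since $\cos\theta$ is strictly decreasing on $(0,\pi/2)$, the map $\theta\mapsto\mfa_{\Pi^\theta_N}[v_\theta,v_\theta]$ increases monotonically, as claimed.

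I expect the main obstacle to be purely computational: carrying out the chain-rule substitution in cylindrical coordinates cleanly and verifying the cancellation $\sin^2\theta+\cos^2\theta=1$ in the principal part, since the cross terms arising separately from $\partial_r$ and $\partial_\varphi$ must recombine into the single remainder of the stated form. The only genuinely new feature relative to the waveguide and conical cases is the summation over sectors together with the $\varphi$-dependent coefficients $\cos(\varphi-j\psi_N)$ and $\sin(\varphi-j\psi_N)$; once the continuity of $T_\theta$ across the sector interfaces is secured, these cause no additional difficulty and the remaining steps transfer unchanged.
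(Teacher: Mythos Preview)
Your proposal is correct and follows essentially the same route as the paper's proof: constant Jacobian for the normalization, change of variables to the straight layer producing the identity $\mfa_{\Pi^\theta_N}[v_\theta,v_\theta]=\mfa_{\Pi^{\pi/2}_N}[w,w]+\cos\theta\cdot\mathfrak r(w)$, and then the sign of $\mathfrak r(w)$ forced by the hypothesis together with Proposition~\ref{thm-sp-in-straight-layer}. Your remainder term carries a minus sign in front of the $\sin(\varphi-j\psi_N)$ contribution where the paper has a plus; a direct chain-rule check of $\partial X/\partial\varphi = r\cos\theta\sin(\varphi-j\psi_N)$ confirms your sign, and in any case the sign is immaterial to the argument since only the negativity of the full scalar $\mathfrak r(w)$ is used.
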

\begin{proof} 
The equality $\|v_\theta, L_2(\Pi^\theta_N)\| = 1$ follows immediately from the fact that the Jacobian of $T_{\theta,\varsigma}$ is constant and equals $\frac{\sin\theta}{\sin\varsigma}$.
Since isomorphisms~~$T_{\theta,\varsigma}$  and their inverse~$T_{\theta,\varsigma}^{-1}$ are Lipschitz ones, they preserve the Sobolev spaces. Hence, the functions $v_{\theta}$ belong to the space $H^1(\Pi^\theta_N)$.

Applying the change of variables $\pmb{\bf x}\to T_\varsigma(\pmb{\bf x})$, we obtain
\begin{equation*}
\mfa_{\Pi_N^\varsigma}[u, u] = \mfa_{\Pi^{\pi/2}_N}[v_{\pi/2}, v_{\pi/2}] + \cos\varsigma\cdot {\mathfrak r}(v_{\pi/2}),
\end{equation*}
where
\begin{equation*}
{\mathfrak r} (v) = 
- 2 \sum_{j=0}^{N-1}\int_{\varpi_{N,j}^{\pi/2}} \left(\cos(\varphi-j\psi_N)r\partial_r v \partial_z v + \sin(\varphi-j\psi_N)\partial_\varphi v \partial_z v\right) \,dr\,d\varphi\,dz.
\end{equation*}
According to Proposition \ref{thm-sp-in-straight-layer}, the max-min principle implies that 
$\mfa_{\Pi^{\pi/2}_N}[v_{\pi/2}, v_{\pi/2}] \geqslant \pi^2.$
Since by assumption $\mfa_{\Pi_N^\varsigma}[u,u]<\pi^2$, it follows necessarily that the term ${\mathfrak r}(v_{\pi/2})$ must be negative.
Therefore, the inequality
 \begin{equation*}
    \mfa_{\Pi_N^\varsigma}[u, u] >\mfa_{\Pi_N^{\pi/2}}[v_{\pi/2}, v_{\pi/2}] + \cos\theta\cdot {\mathfrak r}(v_{\pi/2})
\end{equation*}
holds for all $\theta\in(0, \varsigma)$. Applying the inverse change of variables $\pmb{\bf x}\to T_\theta^{-1}(\pmb{\bf x})$ on the right-hand side, we transfer the inequality back to the layer $\Pi_N^\theta$ and arrive at
\begin{equation*}
    \mfa_{\Pi_N^\varsigma}[u, u] > \mfa_{\Pi_N^\theta}[v_{\theta}, v_\theta],
\end{equation*}
which completes the proof.
\end{proof}

Unlike the problems considered previously for broken waveguides and conical layers, the cut-off value of the essential spectrum in the spectral problem for a polyhedral layer depends both on $\theta$ and $N$ (see \cite{BaMa24}).
%the opening angle and on the dihedral angles of layer's boundary surface. 
We denote the threshold value corresponding to $\Pi^\theta_N$ by $\Lambda_\dagger^\theta$. This threshold equals the first eigenvalue of the Dirichlet spectral problem for the Laplacian in a V-shaped waveguide of unit width with its opening angle $\beta$.

\begin{theorem}
\label{thm-layer-regular-mon}
        If each of the operators $\mathcal{A}^{\Pi^{\varsigma}_N}$ and $\mathcal{A}^{\Pi^{\theta}_N}$ has at least $k$ eigenvalues below their essential spectrum thresholds and $0< \theta < \varsigma < \pi/2$, then 
        \[\lambda_j(\mathcal{A}^{\Pi^\theta_N}) \leq \lambda_j(\mathcal{A}^{\Pi^\varsigma_N})\]
    for all $1\leq j\leq k$.
\end{theorem}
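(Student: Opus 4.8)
The plan is to mimic the proof of Theorem~\ref{thm-monotonicity} almost verbatim, using the transfer operator $\cT_{\theta,\varsigma}$ together with Lemma~\ref{lem-layer-q-form-monotony} and Lemma~\ref{lem-mutual-ev}. First I would fix $j$ with $1\leq j\leq k$ and apply the max-min characterization to $\lambda_j(\mathcal{A}^{\Pi^\varsigma_N})$. The natural route is to choose the orthonormal eigenfunctions $u_1,\dots,u_j\in\SobNil(\Pi^\varsigma_N)$ corresponding to $\lambda_1(\mathcal{A}^{\Pi^\varsigma_N}),\dots,\lambda_j(\mathcal{A}^{\Pi^\varsigma_N})$ and push them into $\Pi^\theta_N$ via the linear map $\cT_{\theta,\varsigma}u = \bigl(\tfrac{\sin\theta}{\sin\varsigma}\bigr)^{1/2}u\circ T_{\theta,\varsigma}$. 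As in the proof of Theorem~\ref{thm-monotonicity}, this map is a bijection between $\SobNil(\Pi^\varsigma_N)$ and $\SobNil(\Pi^\theta_N)$ with inverse $\cT_{\varsigma,\theta}$, and by Lemma~\ref{lem-layer-q-form-monotony} it preserves the $L_2$-norm, so the images $v_{i,\theta}=\cT_{\theta,\varsigma}u_i$ remain orthonormal in $L_2(\Pi^\theta_N)$ and hence span a $j$-dimensional subspace of $\SobNil(\Pi^\theta_N)$.

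The heart of the argument is the form inequality. For an arbitrary $u$ in the linear span of $u_1,\dots,u_j$, I would like to assert the chain
\[
\mfa_{\Pi^\theta_N}[\cT_{\theta,\varsigma}u,\cT_{\theta,\varsigma}u] < \mfa_{\Pi^\varsigma_N}[u,u] \leq \lambda_j(\mathcal{A}^{\Pi^\varsigma_N})\,\|u;L_2(\Pi^\varsigma_N)\|^2 = \lambda_j(\mathcal{A}^{\Pi^\varsigma_N})\,\|\cT_{\theta,\varsigma}u;L_2(\Pi^\theta_N)\|^2.
\]
The second inequality is just the max-min bound for a function in the span of the first $j$ eigenfunctions, and the final equality is the $L_2$-isometry from Lemma~\ref{lem-layer-q-form-monotony}. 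The first, strict inequality is the monotonicity of the form, which is exactly the conclusion of Lemma~\ref{lem-layer-q-form-monotony}. Combining this with Lemma~\ref{lem-mutual-ev}, applied with $M=\lambda_j(\mathcal{A}^{\Pi^\varsigma_N})$ and the $j$ linearly independent functions $v_{1,\theta},\dots,v_{j,\theta}$, gives that $\mathcal{A}^{\Pi^\theta_N}$ has at least $j$ eigenvalues strictly below $\lambda_j(\mathcal{A}^{\Pi^\varsigma_N})$, and therefore $\lambda_j(\mathcal{A}^{\Pi^\theta_N}) < \lambda_j(\mathcal{A}^{\Pi^\varsigma_N})$, which is even stronger than the claimed $\leq$.

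The main obstacle I anticipate is matching the hypotheses of Lemma~\ref{lem-layer-q-form-monotony} to the present setting. That lemma is stated under the assumption $\mfa_{\Pi^\varsigma_N}[u,u]<\pi^2$, whereas the threshold $\Lambda_\dagger^\theta$ for polyhedral layers is in general strictly below $\pi^2$ and, moreover, depends on $\theta$ through the dihedral angles. I would need to check that each eigenfunction $u_i$ indeed satisfies $\mfa_{\Pi^\varsigma_N}[u_i,u_i]=\lambda_i(\mathcal{A}^{\Pi^\varsigma_N})<\Lambda_\dagger^\varsigma\leq\pi^2$, so that Lemma~\ref{lem-layer-q-form-monotony} applies to every function in the span (the quadratic form being convex, the bound for each $u_i$ transfers to arbitrary elements of the span via the normalized eigenbasis). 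The subtle point is that the form-monotonicity conclusion for $v_\theta$ produced by Lemma~\ref{lem-layer-q-form-monotony} is what yields the strict inequality, and this monotonicity is purely a statement about the forms on the fixed geometric family $\Pi^\theta_N$; it does not require the variable threshold $\Lambda_\dagger^\theta$ to enter. Consequently the argument only uses that both operators possess $k$ eigenvalues below their respective thresholds to guarantee that $\lambda_j(\mathcal{A}^{\Pi^\theta_N})$ is genuinely an eigenvalue rather than the bottom of the essential spectrum, and the comparison $\lambda_j(\mathcal{A}^{\Pi^\theta_N})\leq\lambda_j(\mathcal{A}^{\Pi^\varsigma_N})$ follows directly.
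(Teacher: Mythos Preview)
Your proposal is correct and follows essentially the same route as the paper's own proof: transport the orthonormal eigenfunctions via $\cT_{\theta,\varsigma}$, use Lemma~\ref{lem-layer-q-form-monotony} for both the $L_2$-isometry and the strict form inequality, and conclude via Lemma~\ref{lem-mutual-ev} (equivalently, the min-max principle) applied to the $j$-dimensional image subspace. You are in fact slightly more careful than the paper in two places: you explicitly check that every normalized $u$ in the span of $u_1,\dots,u_j$ satisfies $\mfa_{\Pi^\varsigma_N}[u,u]\le\lambda_j(\mathcal{A}^{\Pi^\varsigma_N})<\Lambda_\dagger^\varsigma<\pi^2$, so that the hypothesis of Lemma~\ref{lem-layer-q-form-monotony} is met, and you correctly isolate the role of the assumption that $\mathcal{A}^{\Pi^\theta_N}$ already has $k$ eigenvalues below its own threshold (namely, to ensure that the $j$-th min-max value is an actual discrete eigenvalue).
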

\begin{proof}
Assume that the functions $u_{1}, u_{2}, \ldots, u_{k}$ form an orthonormal set in $L_2(\Pi_N^\varsigma)$, consisting of eigenfunctions of $\mathcal{A}^{\Pi_N^\varsigma}$ corresponding to the eigenvalues $\lambda_{1}(\mathcal{A}^{\Pi^\varsigma_N})$, $\lambda_{2}(\mathcal{A}^{\Pi^\varsigma_N})$, \ldots, $\lambda_{k}(\mathcal{A}^{\Pi^\varsigma_N})$, respectively. 
As in the proof of Theorem~\ref{thm-monotonicity}, due to Lemma~\ref{lem-layer-q-form-monotony} the linear operator $\cT_{\theta, \varsigma}\colon \SobNil(\Pi^\varsigma_N)\mapsto\SobNil(\Pi^\theta_N)$, given by formula~\eqref{v_theta},
%$\cT_{\theta, \varsigma} u = \left(\frac{\sin\theta}{\sin\varsigma}\right)^{1/2}u\circ T_{\theta,\varsigma}$, 
is one-to-one and onto and preserves norm.  The functions $v_{j,\theta} = \cT_{\theta,\varsigma}u_j$, $1\leqslant j\leqslant k$, belong to $\SobNil(\Pi_N^\theta)$, remain orthonormal in $L_2(\Pi_N^\theta)$, and hence are linearly independent.  Now, for each $j$ not greater than $k$, we define a subspace $E_j$ as a linear span of the eigenfunctions 
$u_{1}, u_{2}, \ldots, u_{j}$. Since functions $v_{1,\theta}, v_{2,\theta},\ldots, v_{j,\theta}$ belongs to the image $\cT_{\theta,\varsigma}E_j$, the dimension of~$\cT_{\theta,\varsigma}E_j$ equals~$j$.
Moreover, Lemma~\ref{lem-layer-q-form-monotony} results in the following chain of inequalities
\[
\mfa_{\Pi_N^\theta}[\cT_{\theta,\varsigma} u, \cT_{\theta,\varsigma}u] <
\mfa_{\Pi_N^\varsigma}[u, u] \leq \lambda_j(\cA^{\Pi_N^\varsigma})\|u; L_2(\Pi_N^\varsigma)\|^2 = \lambda_j(\cA^{\Pi_N^\varsigma})\|\cT_{\theta,\varsigma} u; L_2(\Pi_N^\theta)\|^2
\]
for all $u\in E_j$.
Applying Lemma~\ref{lem-mutual-ev} to the subspace $\cT_{\theta,\varsigma}E_j$, we obtain the desired inequality for $\lambda_j(\cA^{\Pi_N^\theta})$.
\end{proof}

% Main difference with Theorem~\ref{thm-monotonicity} is that the threshold of $\Pi^\theta_N$ depends on $\theta$. Though we can not guarantee the existence

\begin{remark}
The first eigenvalue $\lambda_1(\mathcal{A}^{\Pi^\theta_N})$ converges to $\pi^2$ as $\theta\to\pi/2$. 
\end{remark}
\begin{proof}
Let $u$ be an eigenfunction normalized in $L_2(\Pi^\theta_N)$ corresponding to $\lambda_1(\mathcal{A}^{\Pi^\theta_N})$. Using the notation from the proof of Lemma~\ref{lem-layer-q-form-monotony}, we have
 \begin{equation*}
    \mfa_{\Pi_N^\theta}[u, u] = \mfa_{\Pi^{\pi/2}_N}[v_{\pi/2}, v_{\pi/2}] + \cos\theta\cdot {\mathfrak r}(v_{\pi/2}).
\end{equation*}
Applying the following arithmetic–geometric mean inequalities
\begin{eqnarray*}
    2|\cos(\varphi-j\psi_N)r\partial_rv\partial_zv| &\leq & r|\partial_r v|^2 + \cos^2(\varphi-j\psi_N)r|\partial_zv|^2, \\
    2|\sin(\varphi-j\psi_N)\partial_\varphi v\partial_zv| &\leq & r^{-1}|\partial_\varphi v|^2 + \sin^2(\varphi-j\psi_N)r|\partial_zv|^2,
\end{eqnarray*}
we obtain an estimate
\[
|{\mathfrak r}(v_{\pi/2})| \leq \mfa_{\Pi^{\pi/2}_N}[v_{\pi/2}, v_{\pi/2}],
\]
which implies
\begin{equation*}
     \mfa_{\Pi^\theta_N}[u,u] \geqslant (1 - \cos\theta)\mfa_{\Pi^{\pi/2}_N}[v_{\pi/2},v_{\pi/2}].
\end{equation*}
Since $\lambda_1(\mathcal{A}^{\Pi^\theta_N}) =  \mfa_{\Pi^\theta_N}[u,u]$, using the max-min principle, we obtain the following chain of inequalities
\begin{equation*}
    \pi^2>\Lambda_\dagger^\theta>\lambda_1(\mathcal{A}^{\Pi^\theta_N})\geqslant (1 - \cos\theta)\mfa_{\Pi^{\pi/2}_N}[v_{\pi/2},v_{\pi/2}]\geqslant(1-\cos\theta)\pi^2.
\end{equation*}
Since $\cos\theta\to0$ as $\theta\to\pi/2$, the squeeze theorem ensures that the first eigenvalue~$\lambda_1(\mathcal{A}^{\Pi^\theta_N})$ converges to $\pi^2$.
\end{proof}

\begin{remark}
\label{rem-5}
    Theorem \ref{thm-layer-regular-mon} remains valid not only for regular layers, but also for all layers that admit an inscribed ball. The proof should be slightly modified in this general case, since the partition $\{\varpi_{N,j}^\theta\}$ then involves unequal parts. To address this, one should replace equal angles $\psi_N$ by unequal angles  $\psi_N^j$ in the definitions of the partition $\{\varpi_{N,j}^\theta\}$ and the  isomorphisms~$T_\theta$ and~$T_\theta^{-1}$.
\end{remark}

% \begin{figure}[ht!]
%     \centering
%     \includegraphics[width=0.4\textwidth]{figure_3.eps}  
%     \caption{The set $\varpi_3$ for the trihedral layer $\Pi$ on  Fig. \ref{fig-01}}
% \end{figure}

\section{Asymmetry influence}
\label{sec-6}

In the previous section, we demonstrated that the unfolding of a regular polyhedral layer, similar to increasing the opening angle of a V-shaped waveguide or a conical layer, leads to analogous behaviour -- the eigenvalues below the threshold of the continuous spectrum depend monotonically on the degree of opening of the layer. It should be pointed out that the significant difference between polyhedral layers and the  aforementioned planar and conical shapes is that not only the spectrum below the threshold but also the threshold value $\Lambda_\dagger$ itself depends on the degree of opening, since $\Lambda_\dagger$ is characterized by the geometry of the layer, in particular, its dihedral angles.
It is the simultaneous changes of the discrete spectrum and the threshold $\Lambda_\dagger$ that is key to the example discussed below.
 
It might appear that symmetry is not essential and serves only to simplify the formalization of the proof (see, in particular, Remark~\ref{rem-5}). However, asymmetry, especially in the rates at which the faces unfold, can lead to unexpected effects: since the threshold $\Lambda_\dagger$ shifts during unfolding of a layer, two spectra of two layers cannot be compared by examining eigenvalue behaviour only.
Below, we present an example of two trihedral layers where one is obtained by unfolding the other, yet the more unfolded layer has eigenvalues below the (shifted) threshold, whereas the less unfolded one does not. We call a trihedral layer more unfolded if all plane and dihedral angles of its outer boundary are not less than the corresponding plane and dihedral angles of the other.

\begin{figure}[ht!]
    \centering
    \includegraphics[width=0.6\textwidth]{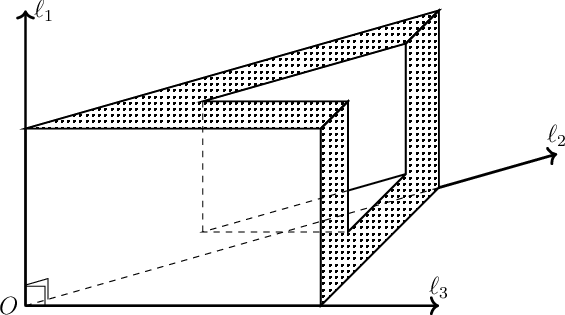}\qquad
    \caption{A layer constructed from a trihedral angle with two right vertex angles and a small third vertex angle}
    \label{fig-03}
\end{figure}

Such an example follows from our work~\cite{BaMa24}, where it was proved that a trihedral layer with two right vertex angles and one sufficiently small vertex angle has no eigenvalues below the threshold. Increase of this small vertex angle
changes the layer geometry, hence, both egenvalues and the threshold value shifts,
and eventually it leads to the emergence of an eigenvalue $\lambda_1<\Lambda_\dagger$. Indeed, in the limmiting case when this angle becomes right, the existence of such an eigenvalue is established in~\cite{BaAI20}.
We conjecture that the observed eigenvalue emmergence/absorption effect is specific for the assymetry case and is not realized for regular layers.


\begin{thebibliography}{100}

\bibitem{AvBeGiMa91}
Avishai Y., Bessis D., Giraud B.G., Mantica G., \emph{Quantum bound states in open geometries}, Phys. Rev. B. {\bf 44} (1991), 8028--8034. %https://doi.org/10.1103/PhysRevB.44.8028

\bibitem{BaMa25}
Bakharev, F. and Matveenko, S., \emph{Fractional Laplacian in V‐shaped waveguide}, Mathematische Nachrichten, {\bf 298}(2), (2025), pp.427-436.

\bibitem{BaMa24}
Bakharev, F. and Matveenko, S., \emph{Spectra of the Dirichlet Laplacian in 3-dimensional polyhedral layers}, St. Petersburg Mathematical Journal, {\bf 35}(4), (2024), 597-610.

\bibitem{BaAI20} 
Bakharev F.L., Nazarov A.I., \emph{Existence of the discrete spectrum in the Fichera layers and crosses of arbitrary dimension}, Journal of Functional Analysis {\bf 281} (2021), 109071. 
%https://doi.org/10.1016/j.jfa.2021.109071

\bibitem{BiSo80}
Birman M.S., Solomyak M.Z., \emph{Spectral theory of self-adjoint operators in Hilbert space}, 2nd ed., revised and extended. Lan', St. Petersburg, 2010 [in Russian]; English transl. of the 1st ed.: Mathematics and  Its Applic. Soviet Series. {\bf 5}, Kluwer, Dordrecht etc. 1987.


\bibitem{CaLoMuMu93}
Carini~J.P., Londergan~J.T., Mullen~K., Murdock~D.P.,  \emph{Multiple bound states in sharply bent waveguides}, Physical Review B, {\bf 48}(7), (1993), 4503.

\bibitem{CaExKr04}
Carron G., Exner P., Krej\v{c}i\v{r}\'{\i}k D., \emph{Topologically nontrivial quantum layers}, Journal of Mathematical Physics {\bf 45} (2004), 774--784. %https://doi.org/10.1063/1.1635998

\bibitem{DaLaOu18}
Dauge M., Lafranche Y., Ourmi\`{e}res-Bonafos T., \emph{Dirichlet Spectrum of the Fichera Layer}, Integr. Equ. Oper. Theory. {\bf 90} (2018), 60. 
%https://doi.org/10.1007/s00020-018-2486-y

\bibitem{DaLaRa12}
Dauge M., Lafranche Y., Raymond N., \emph{Quantum waveguides with corners}, ESAIM: Proc. {\bf 35} (2012), 14--45. 
%https://doi.org/10.1051/proc/201235002

\bibitem{DaOuRa15}
Dauge M., Ourmières-Bonafos T., Raymond N., \emph{Spectral asymptotics of the Dirichlet Laplacian in a conical layer}, Communications on Pure and Applied Analysis {\bf 14} (2015), 1239--1258. 

\bibitem{DaRa12}
Dauge M., Raymond N., \emph{Plane waveguides with corners in the small angle limit}, Journal of Mathematical Physics {\bf 53} (2012), 123529. %https://doi.org/10.1063/1.4769993

\bibitem{DuExKr01}
Duclos P., Exner P., Krej\v{c}\v{r}\`{\i}k D., \emph{Bound States in Curved Quantum Layers}, Commun. Math. Phys. {\bf 223} (2001), 13--28. 
%https://doi.org/10.1007/PL00005582

\bibitem{ExKo15}
Exner P., Kova\v{r}\`{\i}k H., \emph{Quantum Waveguides}, Springer International Publishing: Imprint: Springer, Cham (2015).
 
\bibitem{ExSeSt89}
Exner P., \v{S}eba P., \v{S}t\`{o}vi\v{c}ek P., \emph{On existence of a bound state in an {\sf L}-shaped waveguide}, Czech. J. Phys. {\bf 39} (1989), 1181--1191. 

\bibitem{ExTa10}
Exner P., Tater M., \emph{Spectrum of Dirichlet Laplacian in a conical layer}, J. Phys. A: Math. Theor. {\bf 43} (2010), 474023. 

\bibitem{LiLu07}
Lin C., Lu Z., \emph{Existence of bound states for layers built over hypersurfaces in $R^{n+1}$}, Journal of Functional Analysis {\bf 244} (2007), 1--25.

\bibitem{NaSh14}
Nazarov S.A., Shanin A.V., \emph{Trapped modes in angular joints of 2D waveguides}, Applicable Analysis {\bf 93} (2014), 572--582. 

\bibitem{Pa17}
Pankrashkin K., \emph{Eigenvalue inequalities and absence of threshold resonances for waveguide junctions}, Journal of Mathematical Analysis and Applications {\bf 449} (2017), 907--925. 


\end{thebibliography}
\end{document}